\theoremstyle{plain}
\newtheorem{proposition}{Proposition}
\newtheorem{lemma}[proposition]{Lemma}
\newtheorem{definition}[proposition]{Definition}
\newtheorem*{proposition*}{Proposition}
\newtheorem*{theorem*}{Theorem}
\newtheorem*{corollary*}{Corollary}
\newtheorem*{lemma*}{Lemma}
\newtheorem*{remark*}{Remark}
\newtheorem*{example*}{Example}
\newtheorem*{definition*}{Definition}
\newcommand{\Z}{\mathbb{Z}}
\newcommand{\Q}{\mathbb{Q}}
\newcommand{\R}{\mathbb{R}}
\newcommand{\C}{\mathbb{C}}
\begin{document}

\title{Frame ambiguity in Open Gromov-Witten invariants}

\author{Vito Iacovino}

\address{}

\email{vito.iacovino@gmail.com}

\date{version: \today}

%%%%%%%%%%%%%%%%%%%%%%%%%%%%%%%%%%%%%%%%%%%%%%%%%%%%%%%%%%%
%%%%%%%%%%%%%%%%%%%%%%%%%%%%%%%%%%%%%%%%%%%%%%%%%%%%%%%%%%%
%%%%%%%%%%%%%%%%%%%%%%%%%%%%%%%%%%%%%%%%%%%%%%%%%%%%%%%%%%%

\begin{abstract}
We consider Open Gromov-Witten invariants for noncompact Calabi-Yau in the case the Lagrangian has the topology of $\R^2 \times S^1$. 
The definition of the invariant involves the choice of a frame for the Lagrangian, in accord with string theory. 

Our result applies to the examples arising from Large $N$-duality. In particular it leads to knot and link invariants counting holomorphic curves. 
\end{abstract}

\maketitle

\section{Introduction}

For $X$ a \emph{compact} Calabi-Yau three-fold and $L$ a Maslov index zero Lagrangian submanifold, 
in \cite{OGW1}, \cite{OGW2}, we construct the Open Gromov-Witten invariants associated to the pair $(X,L)$. 
These invariants are the natural mathematical formulation of the Open Topological String partition function defined by Witten \cite{W2}. 
Our construction formalizes the result of Witten expressing the open topological amplitudes in terms of Wilson loop integrals of the Chern-Simons theory living on $L$. 

%For not compact pairs $(X,L)$ there are interesting situation where we can still have a good moduli space  

In \cite{AV} and \cite{AKV}, Aganagic, Klemm and Vafa consider examples of \emph{noncompact} pairs $(X,L)$ in the case that $L$ has the topology of $\R^2 \times S^1$.
In these examples it is realized that the Open Topological partition function is not uniquely determined, but it is affected by an ambiguity.
The apparence of this ambiguity is tied with the usual fact that a problem on a noncompact space is well defined only after some boundary conditions at infinity are fixed.

The computations of \cite{AV} and \cite{AKV} were made in two different ways, using mirror symmetry and using large $N$ duality. 
In mirror symmetry the ambiguity is viewed as a choice of boundary condition at infinity on the mirror non-compact brane. 
In large $N$ duality the ambiguity manifests in the choice of the frame of the knot associated to the Lagrangian submanifold.

In this paper we give a mathematical definition of Open Gromov-Witten invariants for non-compact Lagrangians $L$ whose connected components have the topology 
of $\R^2 \times S^1$. We extend the construction of \cite{OGW1} to this case.
Here we need to make the assumption that Gromov compactness holds.
%Under the assumptions that Gromov compactness holds we are able to adapt the construction of \cite{OGW1} in this case. 
We provide an explanation of the frame ambiguity entirely in symplectic topology, without using any (unproved) duality. 
Our result provides the natural definition of Open Gromov-Witten invariants used in Large $N$ duality (\cite{OV}, \cite{MV}, \cite{LMV}). 
In particular it leads to knot and link invariants counting holomorphic curves. 
In the past there have been other attempts to define such invariants in symplectic topology, 
however they lead to knot invariants which seem distinct from the original invariant (see \cite{Ng}).

%Our result applies to the examples considered in \cite{AV} and \cite{AKV}. 

We consider first the problem of defining the linking number of two curves on $L$. The linking number is defined after a choice of compactification of $L$ to a $3$-sphere.  
We introduce the topological notion of frame of $L$. 
To each frame corresponds a compactification of $L$ and therefore a definition of linking number of curves. 
We apply these considerations to define Open Gromov-Witten invariants for the pair $(X,L)$ 
as generalized linking number of a system of chains following the same recipe of \cite{OGW1}. 
An analogous construction can be applied every time that $H_2(L, \Q)=0$. 

We can also understand the appearance of the frame ambiguity using Chern-Simons theory, more in relation with \cite{W2} (see \cite{OGW2}). 
%From \cite{W2}, \cite{OGW1} and \cite{OGW2} it follows that in order 
To define the open Gromov-Witten invariants for the pair $(X,L)$ we need in particular to define the Chern-Simons propagator living on $L$.
As usual in not compact situation, the problem involves the choice of boundary conditions at infinity.
%This definition involves choices of boundary conditions at infinity. 
% If the $3$-manifold is not compact there can be in-equivalent choices of these propagator for different choices of the boundary condition at infinity. 
A way to proceed consist in define the propagator on $L $ restricting the propagator defined on a compactification of $L$. 
%Actually, in the case arising from Large $N$ duality, a frame of the knot defines a compactification of the associated Lagrangian.

%Since the manifold $\R^2 \times S^1$ admits compactifications to a $3$-sphere we will use a topological definition that generalizes the linking number curves \(see \cite{OGW1}). 
% We show that to define this number it is necessary to choose an extra geometric datum for $L$, what we define as a frame of $L$. 

%The lagrangians studied in \cite{AKV} are associated to knots of $S^3$. Assuming large $N$ duality, their open Gromov-Witten invariant can be computed in terms of some Chern-Simons invariant of the knot. However, to define the knot invariant is necessary to choice a frame of the knot.  

\section{Frames and Linking numbers} 

Let $L$ be a three manifold with the topology of $\R^2 \times S^1$. In this section we consider the problem of define the linking number of two curves on $L$. 
We will show that (essentially because $H_2(L)=0$) it is possible to extend the usual definition of linking number on $S^3$ to this topology 
but it can not be done in a unique way.

%Let $K \subset L$ be a compact subset of $L$ of the topology of $S^1 \times Q$ with $Q$ contractible, such that the natural map $H_1(K) \rightarrow H_1(L)$ is an isomorphism. Up to homotopy there is a unique choice of such a set $K$. Define 
%$$L'= L \setminus K .$$ 
Let $L' \subset L$ be a complement of compact set of $L$ such that 
$$ L' \cong \R \times  S^1 \times S^1 .$$
%Observe that $L'$ is homotopy equivalent to $S^1 \times S^1$, in particular $H_1(L') \cong \Z \times \Z$. 
In particular $H_1(L') \cong \Z \times \Z$.
Fix a generator $v$ of the kernel of $H_1(L') \rightarrow H_1(L)$:
$$\text{Span}_{\Z} \{  v \} = \text{ker} \{ H_1(L') \rightarrow H_1(L) \} .$$
\begin{definition} \label{def-frame}
A frame of $L$ is an element $f \in H_1(L') $ such that 
\begin{equation} \label{generate}
\text{Span}_{\Z} \{  v, f \}  = H_1(L') .
%\langle \xi, \eta \rangle = H_1(L') .
\end{equation}
We identify two frames if they differ by a sign.
\end{definition}

Fix an isomorphism $H_1(L') \cong \Z \times \Z$ and write $f= (f_1,f_2)$ and $v=(v_1,v_2)$. Condition (\ref{generate}) is equivalent to 
\begin{equation} \label{condition}
f_1 v_2 - f_2 v_1 = \pm 1  .
\end{equation}  

We now consider the problem of define linking numbers of curves on $L$ after we pick a frame of $L$. 
By a linking number of curves on $L$ we mean a law that associates to every pair of embedded disjoint curves on $L$ 
an integer that jumps by $\pm 1$ when two curves cross, with sign determinate according to the sign of the cross. 
\begin{lemma} \label{frame-link}
To each frame of $L$ is associated a definition of linking number of curves on $L$.
\end{lemma}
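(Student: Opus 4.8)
The plan is to use the frame to build a closed compactification $\hat{L}_f$ of $L$ and to \emph{define} the linking number of curves on $L$ as the ordinary linking number computed inside $\hat{L}_f$. Concretely, I would fix a large torus $T = \{R\} \times S^1 \times S^1 \subset L'$ cutting off the end, and glue a solid torus $V \cong D^2 \times S^1$ to $L$ along $T$ so that the meridian $\partial D^2 \times \{\text{pt}\}$ is identified with a representative of $f$. Since the homeomorphism type of a Dehn filling, relative to the complement, depends only on the meridian slope (two choices of longitude differ by a twist that extends across $V$), this produces a closed oriented $3$-manifold $\hat{L}_f = L \cup_T V$, well defined up to homeomorphism fixing $L$ and depending only on $f$ up to sign, i.e.\ exactly on the data of the frame.

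Next I would check that $\hat{L}_f$ is an integral homology sphere, which is where the hypothesis $H_2(L)=0$ enters. Note $H_1(L) \cong \Z$, that the map $H_1(L') \rightarrow H_1(L)$ is surjective with kernel $\Z v$, and that by the defining condition (\ref{generate}) the classes $\{v,f\}$ form a basis of $H_1(L') \cong H_1(T)$. In the Mayer--Vietoris sequence for $\hat{L}_f = L \cup_T V$, the map $H_1(T) \rightarrow H_1(L) \oplus H_1(V)$ sends $f \mapsto (\text{generator}, 0)$, since $f$ bounds the meridian disk of $V$ and maps to a generator of $H_1(L)$, and $v \mapsto (0, \text{generator})$, since $v$ dies in $L$ and may be taken as the core longitude of $V$. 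Hence this map is an isomorphism $\Z^2 \rightarrow \Z^2$, and using $H_2(L) = H_2(V) = 0$ the sequence yields $H_1(\hat{L}_f) = 0$ and $H_2(\hat{L}_f) = 0$. Thus $\hat{L}_f$ is a homology $3$-sphere.

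Finally I would define the linking number: given disjoint embedded oriented curves $\gamma_1, \gamma_2$ on $L$, they are compact and hence lie in $L \subset \hat{L}_f$, away from $V$; since $H_1(\hat{L}_f) = 0$, the curve $\gamma_2$ bounds an integral $2$-chain $\Sigma$, and I set $\text{lk}(\gamma_1,\gamma_2) := \gamma_1 \cdot \Sigma$, with independence of $\Sigma$ following from $H_2(\hat{L}_f)=0$. Integrality, symmetry, and the jump by $\pm 1$ with the correct sign under a crossing are then the standard properties of linking numbers in a homology sphere. The step I expect to carry the real content is the \emph{well-definedness}: that the resulting integer is insensitive to the auxiliary choices (the radius $R$, the identification $L' \cong \R \times S^1 \times S^1$, and the longitude of the filling) and depends only on the frame $f$ up to sign. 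This reduces, as above, to the fact that a Dehn filling depends, relative to the complement, only on its meridian slope, together with the observation that distinct frames $f$ and $f+kv$ produce distinct self-linking data for the core $S^1$ of $L$ --- which is precisely the frame ambiguity the paper is after.
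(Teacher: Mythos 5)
Your argument is correct, but it follows a genuinely different route from the paper's own proof of Lemma \ref{frame-link}. You construct a closed manifold by Dehn-filling the end of $L$ with meridian slope $f$, verify via Mayer--Vietoris that the result is a homology sphere (with a genus-one Heegaard splitting it is in fact $S^3$), and then invoke the standard theory of linking numbers in homology spheres. The paper never builds the compactification inside the proof: it works intrinsically on $L$, fixing a pair of model curves $(\gamma_1^0,\gamma_2^0)$ near infinity representing multiples of $f$, declaring such a pair unlinked when each curve can be homotoped out of every compact set without crossing the other, and defining $\text{link}(\gamma_1,\gamma_2)=B_1\cap B_2$, where $B_i$ is a $2$-chain realizing a homotopy from $\gamma_i$ to $c_i\gamma_i^0$ and $[\gamma_i]=c_i f$ in $H_1(L)$; independence of the model pair is argued by joining two unlinked pairs through mutually non-intersecting homotopies. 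The compactification you use does appear in the paper, but only \emph{after} the lemma, as an equivalent reformulation (frames induce embeddings (\ref{embedding}) with $S^3\setminus L\cong S^1$), and the agreement of the two definitions is asserted there rather than used as the proof. Your route buys rigor cheaply: integrality, symmetry, independence of the bounding chain, and the $\pm 1$ crossing jump are classical in a homology sphere, and all frame-dependence is isolated in the standard fact that a Dehn filling depends, relative to the complement, only on its meridian slope. The paper's route buys adaptability: the ``model unlinked curves at infinity plus homotopy'' formulation is precisely what is reused later, in the proof that the invariants $F^f_{g,n_1,\ldots,n_h}(A)$ are well defined, where the unlinked models $W_G^{unlinked}$ are products of chains swept by curves representing $f$ at fixed slices of the end --- a construction that does not literally factor through a compactification. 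One point to tighten in your write-up: gluing $V$ ``to $L$ along $T$'' should be read as first truncating the end of $L$ at $T$ (or as gluing at infinity, as the paper does); as you note, independence of the truncation radius $R$ then follows because the collar between two truncations absorbs into the solid torus without changing the slope.
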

\begin{proof} 
The linking number between two curves on $S^3$ is defined by shrinking the curves and counting intersection numbers.

Not all the curves in $L$ can be shrinked. 
However we can adapt the definition of $S^3$ declaring zero curves at infinity representing some multiple of $f$ in $H_1(L')$. 
%Instead of shrinking the curve we send $\tilde{\gamma}$ to infinity.
More precisely we can proceed as follows. 

Two curves $\gamma_1^0$ and $\gamma_2^0$ on $L'$ representing a multiple of the homological class $f$ are defined unlinked if 
for each compact set $K$ of $L$ there exists an homotopy that does not intersect $\gamma_2^0$, of $\gamma_1^0$ to a curve on $L \setminus K$ , 
and there exists a homotopy of $\gamma_2^0$ that does not intersect $\gamma_1^0$, to a curve in $L \setminus K$.

Observe that there are plenty of pairs of curves with this property. 
For example this holds for each pair $(\gamma_1^0, \gamma_2^0 )$ with 
$\gamma_1^0$ in $\{ t_1 \} \times S^1 \times S^1 $ and $\gamma_2^0$ in $\{ t_2 \} \times S^1 \times S^1 $ for $t_1 \neq t_2$
and representing a multiple of the homological class $f$
(here we fixed an isomorphism $L' \cong \R \times S^1 \times S^1$).

Fix a pair $(\gamma_1^0, \gamma_2^0 )$ of curves on $L'$ representing the homological class $f$ and unlinked in the sense above. 
Given two curves $\gamma_1$ and $\gamma_2$ on $L$, there exist $c_1, c_2 \in Z$ such that $ [\gamma_1] = c_1 f $ and $ [\gamma_2] = c_2 f $ in $H_1(L)$. 
For $i=1,2$, $\gamma_i$ is homotopic to $c_i \gamma_i^0$, that is there exist a singular chain $B_i \in C_2([0,1] \times L)$ such that 
$$ \partial B_i = c_i \{ 1 \} \times \gamma_i^0 - \{ 0 \} \times \gamma_i .$$
Define the linking number of $ \gamma_1, \gamma_2$ as the intersection number of $B_1$ and $B_2$
$$ \text{link}(\gamma_1, \gamma_2) =(B_1 \times_{[0,1]} B_2 )\cap \Delta=  B_1 \cap B_2 .$$
From the definition it follows that $ \text{link}(\gamma_1, \gamma_2)$ does not depend on the choice of the pair $( \gamma_1^0 , \gamma_2^0)$.
%Of course $ \text{link}(\gamma_1, \gamma_2)$ depends on the choice of $\gamma_1^0$ and $\gamma_2^0$. 
%However if constrain $\gamma_1^0$ and $\gamma_2^0$ to be "far away enough`` the linking number is well defined. 
%In fact two pairs of curves "far away enough`` can be deformed to each other without any crossing.
In fact if $( \gamma_1^0 , \gamma_2^0)$ and $( \gamma_1^1 , \gamma_2^1)$ are two pairs of unlinked curves in the sense above, 
there exists a homotopy between $\gamma_1^0$ and $\gamma_1^1$ and a homotopy between $\gamma_2^0$ and $\gamma_2^1$ not intersecting each other.

\end{proof}

We can also see the correspondence between frames and linking numbers considering compactifications of $L$ to spheres. 
Observe that, if we have a compactification of $L$ to an $S^3$ 
\begin{equation} \label{embedding}
\bar{f}: L \hookrightarrow S^3 .
\end{equation} 
we can define the linking number of curves on $L$ considering them as curves on $S^3$.
Of course the result will depend on the compactification. 

The connection with the preview definition is the following. Frames of $L$ induces compactifications of $L$
$$ \text{frames of }L \leadsto \text{compactifications of }L   .$$
To a frame $f$ it corresponds a campactification (\ref{embedding}) such that $S^3 \setminus L \cong S^1$ and 
\begin{equation} \label{kernel}
\text{Span}_{\Z} \{ f \} = \text{ker} (H_1(L') \rightarrow H_1(S^3 \setminus K)).
\end{equation}
Fix a diffeomorphism $L \cong \R^2 \times S^1$. 
The compactification $S^3$ is made gluing to $L$ another copy of $\R^2 \times S^1$ over $\R^2 \backslash \{ (0,0) \})$ in the following way.
Observe that $(\R^2 \backslash \{ (0,0) \}) \times S^1 \cong \R \times T^2$, where $T^2$ is the torus of dimension two. 
The elements $f, v \in H_1(L')$ define a basis (over $\Z$) of $H_1(T^2)$. 
Let $A_f : T^2 \rightarrow T^2$ be a map that interchange $f$ and $v$ up to sign and preserves the orientation. Define 
$$ S^3= L \sqcup_g  (\R^2 \times S^1) $$
where $g$ is the diffeomorphism $g : (\R^2 \backslash \{ (0,0) \}) \times S^1  \rightarrow(\R^2 \backslash \{ (0,0) \}) \times S^1 $ given by 
\begin{equation} \label{diffeo}
(r,(s,t)) \rightarrow (-r, A_f( s, t))
\end{equation}
where $r \in \R$ and $s,t \in S^1 $.
It is easy to see that the definition of linking number of Lemma (\ref{frame-link}) agrees with the one coming from (\ref{embedding}).

%$(s, t_1, t_2 ) \rightarrow (-s , f_s (t_1,t_2)  )$

%we can Attach $V$ and $L$  $L \setminus S^1$ and For a fixed diffeomorphism  We can attach $V$ and $L$ 

%$V$ is $S^3$ with remuved a circle of $L$.
%$ L \sqcup V = S^3$, $V$ is diffeomomorphic to $\R \times (S^1)^2$

\section{Open Gromov-Witten invariants} 

Let $X$ be a noncompact Calabi-Yau three-folds and $L$ a Lagrangian submanifold with topology of $\R^2 \times S^1$ and Maslov index zero.
In this section we want to define the Open Gromov Witten invariants of the pair $(X,L)$ following the construction of \cite{OGW1}.

Since the space is noncompact we need to make the technical assumption that Gromov compactness holds for closed holomorphic curves 
as well as for holomorphic curves with boundary mapped on $L$. 
The problem of Gromov compactness for noncompact manifolds is considered for example in \cite{Sik}. 

As in \cite{OGW1}, in order to solve the problem of the bubbling of spheres from constant disks, we assume that $L$ is homological trivial in $X$. 
Fix a $B \in C_4(X)$ such that
\begin{equation} \label{B}
 \partial B =L  .
\end{equation}
The necessary modifications are the same of \cite{OGW1} and they will not be reported here.

\subsection{Multi-curves}
The set of decorated graphs $\mathcal{G}$ is defined as in \cite{OGW1}.
For each $G \in \mathcal{G}$, define the moduli space of multi-curves $\overline{\mathcal{M}}_G$ as in \cite{OGW1} 
$$ \overline{\mathcal{M}}_G =  \left( \prod_{i \in I} \overline{\mathcal{M}}_i  \right) / \text{Aut}(G) . $$

In order to be able to count the winding number of the boundary components of the multi-curves 
we need to refine the definition of evaluation map on the boundary marked points.
For this we need to encode the homotopy class of the boundary paths between consecutive boundary marked points as follows.

Let $ \{x_e \}_{e \in H(G)}  \in L^{H(G)} $.
For $v \in V(G)$ let $H(v)= \{ e_1,..,e_{|H(v)|} \}$ 
be the cyclic order of the half edges starting in $v$. 
Define
$$ \pi_1(v) =  \prod_i \pi_1(L, x_{e_i}, x_{e_{i+1}}) $$
where, for each $i$, 
$$ \pi_1(L, x_{e_i}, x_{e_{i+1}})  = \{ \text{homotopy classes of paths between $x_{e_i}$ and $x_{e_{i+1}}$}\} .$$
In the particular case that $H(v)$ is empty we set $\pi_1(v) =  \pi_1(L)$. 
%and $e, e' \in H(v)$ consecutive half edges starting in $v$, denote 
%$$ \pi_1(L, x_e, x_{e'})  = \{ \text{homotopy classes of paths between $x_{e}$ and $x_{e'}$}\} $$
%Let $V_0(G)$ be the set of vertices of $G$ of valence zero. 

Let $\tilde{L}^{H(G)}$ be the covering space of $L^{H(G)}$
\begin{equation} \label{covering}
%\pi : \tilde{L}_G \rightarrow L_G= L^{H(G)}   
 \pi : \tilde{L}^{H(G)} \rightarrow  L^{H(G)}  
\end{equation}
with fibers
\begin{equation} \label{fiber}
%\pi^{-1}( \{x_e \}_{e \in H(G)}) =  \prod_{e \in H(G)} \pi_1(L, x_e, x_{e'}) \times   \prod_{v \in V_0(G)} \pi_1(L).
\pi^{-1}( \{x_e \}_{e \in H(G)}) =  \prod_{v \in V(G)}  \pi_1(v).
\end{equation}
%Here $\pi_1(L, x_e, x_{e'}) $ is the set of homotopy classes of paths between $x_{e}$ and 
%$x_{e'}$ where $e'$ is the half edge consecutive to $e$ in the cyclic order of the ribbon structure. $\hat{V}(G)$ is the set of isolated vertices of $G$ 
%(vertices without edges attached). 
%A point in $\tilde{L}_G$ is a point of $L^{H(G)}$ with attached an homotopy class of paths between points in t

% If a boundary component has less than two marked point we take the homotopy class of the boundary component. 
%$$ \tilde{L}_G = \{ \{x_p \}_{p \in H(G)}, \{ \pi_v  \}_{v \in H(G)} | \text{ $\pi_v \in \pi_1(L, p_v, p_{v'}) $  }   \}  $$
%where $\pi_1(L, p_v, p_{v'}) $ is the homotopy space of paths between $p_v$ and $p_{v'}$ and 
%$v'$ is the half edge consecutive to $v$ in the cyclic order of the ribbon structure. 
%Observe that if a boundary component has zero or one marked point this means that we take the homotopy class of the boundary component.  

Define
$$ \tilde{L}_G = \tilde{L}^{H(G)} / \text{Aut}(G)  .$$
The evaluation map on the boundary punctures can be refined considering the boundary paths between consecutive punctures. This leads to the natural map 
%Theref space $\overline{\mathcal{M}}_G$ comes with a natural map
\begin{equation} \label{evaluation}
\text{ev} : \overline{\mathcal{M}}_G \rightarrow \tilde{L}_G .
\end{equation}

\subsection{Systems of singular chains}

For $e$ internal edge of $G$, there exists a natural lifting $\tilde{\Delta}_e  $ of the big diagonal $\Delta_e$ of $L_G$ 
to a smooth simplicial chain of the orbifold $ \tilde{L}_G$.

%the diagonal $\Delta_e$
%$$ \tilde{\Delta}_e  \subset \tilde{L}_G $$

Consider $e$ connecting two vertices $v_0$ and $v_1$ (it can be $v_0 = v_1$).
Let $ e_0 , e_1$ the puncture associated to $e$, in the boundary components associated to $v_0, v_1$. 
Let $e_i^+$, $e_i^-$ be the half edges after and before $e_i$ respectively. 
On $\tilde{\Delta}_e$, since $x_{e_0}= x_{e_1}$, we can define the gluing maps
$$\pi_1(L, x_{e_0^-}, x_{e_0}) \times \pi_1(L, x_{e_1}, x_{e_1^+}) \rightarrow \pi_1(L, x_{e_0^-}, x_{e_1^+})  $$
$$\pi_1(L, x_{e_1^-}, x_{e_1}) \times \pi_1(L, x_{e_0}, x_{e_0^+}) \rightarrow \pi_1(L, x_{e_1^-}, x_{e_0^+}) . $$
The cases where at least one of $v_0$ and $v_1$ have valence equal to one are specials.
If for example $v_0$ has valence one, the gluing map is
$$\pi_1(L, x_{e_1^-}, x_{e_1}) \times \pi_1(L, x_{e_0}, x_{e_0})  \times \pi_1(L, x_{e_1}, x_{e_1^+}) \rightarrow \pi_1(L, x_{e_1^-}, x_{e_1^+}) . $$

The gluing maps above lead to the map 
\begin{equation} \label{delta-projection}
  \tilde{\Delta}_e \rightarrow \tilde{L}_{G/e} .
\end{equation}

We use this map to extend the definition of system of chains.

\begin{definition} \label{def-system}
A system of singular chains 
$$W_{\mathcal{G}} = \{  W_G  \}_{G \in \mathcal{G}} $$ 
is a collection of singular chains
$W_G \in C_{|E(G)|}( \tilde{L}_G , \mathfrak{o}_G) $
with twisted coefficients in $\mathfrak{o}_G$.   
Fulfilling the following properties:
\begin{itemize}
\item[(a)] For each $G \in \mathcal{G}$, and $e$ an internal edge of $G$, $W_G$ intersects $\tilde{\Delta}_e$ transversely. 

Let $\partial_e W_G$ be the singular chain on $\tilde{L}_{G/e}$ defined by  
$$  W_G \cap \tilde{\Delta}_e \in C_*(\tilde{L}_{G/e},\mathfrak{o}_{G/e} ) $$
using the map (\ref{delta-projection}). 

\item[(b)] The following identity holds as currents  
\begin{equation} \label{boundary-collection}
\partial W_G = \sum_{G'/e'=G} \partial_{e'} W_{G'} .
%\sum_{v \in V(T)} \partial_v W_T .
\end{equation}
Here the sum is taken over all the pairs $(G',e')$ with $e' \in E(G')$ such that $G'/e' \cong G$.
%Here $\partial_e^0 W_T$ is the image of $\partial_e' W_T$ using the projection $ L \times C_{T/e}(L) \rightarrow C_{T/e}(L) $ and
%$$ \partial_v W_T =  \sum_{(T'/e,e/e)=(T,v)} \partial_e^0 W_{T'}  .  $$
%where the sum is over all the trees $T'$ and edges $e \in T'$ such that $T'/e \cong (T,v)$.
\end{itemize}
\end{definition}

In the same way we can extend the definition of homotopies of system of chains and equivalences of homotopies.

Exactly as in \cite{OGW1} we can constrain the pertubation of the Kuranishi spaces $ \{ \overline{\mathcal{M}}_G \}_{G \in \mathcal{G}}  $.
Using these perturbations and the map (\ref{evaluation}) we define a system of chains 
\begin{equation} \label{system}
W_{\mathcal{G}} = \{  W_G \}_{G \in \mathcal{G}}  
\end{equation}
with $W_G \in C_*(\tilde{L}_G , \mathfrak{o}_G)$.
%where, for each $G$, $W_G$ is a singular chain on $\tilde{L}_G$ with twisted co. 

As in \cite{OGW1} we have 
%(remember that here we neglect the last term of (\ref{higher-attach})): 
\begin{proposition} \label{higher-invariance}
(\ref{system}) defines a system of chains.
Different choices of perturbations lead to homotopic systems of chains, with homotopy uniquely determined up to equivalence.
\end{proposition}

Now we can define the Open Gromov-Witten invariants.

\begin{proposition}
Let $f$ be a frame of $L$. 
The rational numbers $F_{g,n_1...,n_h}^f(A)$ are defined counting multi-curves in the relative homology class $A \in H_2(X,L)$ of genus $g$, 
$h$ boundary components and with the $i$-th boundary component winding $n_i$ times around the nontrivial cycle of $L$. 
These numbers depend only on the pair $(X,L)$ and $f$ (and the choice of $B$ in (\ref{B})).

%The numbers $F_{g,n_1...,n_h}^f(A)$ do not depend on the choice depends only on the pair $(X,L)$ (and on the choice of $B$ in (\ref{B})).
\end{proposition}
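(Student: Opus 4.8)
The plan is to define $F^f_{g,n_1,\ldots,n_h}(A)$ as a generalized linking number of the system of chains $W_{\mathcal{G}}$ from (\ref{system}), using the frame-dependent linking number of Lemma \ref{frame-link} in place of the canonical $S^3$-linking number used in the compact setting of \cite{OGW1}. First I would fix the frame $f$; by Lemma \ref{frame-link} this produces a well-defined notion of linking number for curves on $L$. Each boundary component of a multi-curve represents a class in $H_1(L)$, hence a multiple of the nontrivial cycle, and the integer $n_i$ is read off as the winding of the $i$-th boundary. This winding is recorded through the covering data of $\tilde{L}_G$, since the composition of the homotopy classes of boundary paths along a boundary component determines its class in $H_1(L')$. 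I would then restrict the system $W_{\mathcal{G}}$ to the subcollection of graphs $G$ whose topological type matches $(g,h,A)$ and whose boundary windings are $(n_1,\ldots,n_h)$.

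The number itself is obtained by the same pairing recipe as in \cite{OGW1}: one links the boundary chains of the system against each other, weighting each contribution by $1/|\mathrm{Aut}(G)|$, and using the chain $B$ of (\ref{B}) to regularize the constant-disk contributions exactly as there. Because the moduli spaces are Kuranishi orbifolds, these weights are rational, and the resulting count $F^f_{g,n_1,\ldots,n_h}(A)$ is a rational number. The frame enters precisely in the linking step: a boundary component winding around $L$ cannot be shrunk, so to assign it a linking number one must push it to infinity along the frame direction, exactly as prescribed in Lemma \ref{frame-link}.

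For well-definedness I would invoke Proposition \ref{higher-invariance}: two choices of perturbation yield homotopic systems of chains, with homotopy unique up to equivalence. It then suffices to show the generalized linking number is unchanged under a homotopy of systems of chains. The key mechanism is the boundary relation (\ref{boundary-collection}): each individual $W_G$ fails to be a cycle, but its boundary is exactly accounted for by the terms $\partial_{e'} W_{G'}$ coming from edge contractions. When one computes the total linking over all strata, these boundary contributions cancel in pairs across adjacent graphs, so the count is invariant. A homotopy of systems sweeps out a cobordism, and the same cancellation shows the two counts agree.

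The main obstacle is to verify that the frame-dependent linking number is compatible with the gluing maps (\ref{delta-projection}) and with the boundary relation (\ref{boundary-collection}), so that the cancellation just described actually goes through in the noncompact setting. Concretely, one must check that the operation of pushing winding boundaries to infinity along the frame commutes with the composition of homotopy classes of paths encoded by the covering $\tilde{L}_G$, so that the linking contribution of a contracted edge matches on both sides of (\ref{boundary-collection}). One must also confirm that the winding numbers $n_i$ are preserved under homotopies of the system, so that the decomposition by $(g,n_1,\ldots,n_h,A)$ is itself a homotopy invariant. Once these compatibilities are established, independence from the perturbation follows from Proposition \ref{higher-invariance}, and the only remaining choices are the frame $f$ and the chain $B$, as claimed.
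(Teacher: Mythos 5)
Your proposal correctly identifies the overall strategy --- use the system of chains (\ref{system}), replace the $S^3$-linking number by the frame-dependent one of Lemma \ref{frame-link}, and invoke Proposition \ref{higher-invariance} for perturbation invariance --- but it stops exactly where the paper's proof actually begins. You yourself flag ``the main obstacle'': making the frame-dependent linking compatible with the gluing maps (\ref{delta-projection}), the boundary relation (\ref{boundary-collection}), and the recursive contraction procedure of \cite{OGW1}. You do not resolve it, and it is not a routine verification. The difficulty is that in \cite{OGW1} the generalized linking number is defined by recursively \emph{contracting} the chains (starting from graphs with maximal number of edges), and in the present setting this breaks at the first step: the chains are not homologically trivial, since the boundary components wind around the $S^1$ of $L$ and cannot be shrunk. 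The fix cannot be done one pair of curves at a time as in Lemma \ref{frame-link}; one needs a coherent replacement of ``contraction to a point'' by ``homotopy to a standard configuration at infinity'' for \emph{all} decorated graphs simultaneously.

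This is what the paper constructs and your proposal lacks: the explicit unlinked models $W_G^{unlinked} = \prod_{v \in V(G)} W_v$, built from a choice of real parameters $\{ t_e \}_{e \in H(G)}$, where for a vertex of positive area $W_v$ is a product of copies of a curve $\gamma_0$ representing the frame $f$ placed at heights $t_{e_i}$ in $L' \cong \R \times S^1 \times S^1$, and where vertices corresponding to \emph{zero-area disks} require a separate geometric cycle together with compatibility conditions on the data (of the form $t_{\tilde{e}} = t_e - t_{e_1} + t_{e_0}$ under edge contraction). The proof then proceeds by an iterative argument showing that any system of chains is homotopic to a \emph{unique} system $W^0_{\mathcal{G}}$ that is, graph by graph, a rational multiple of $W_G^{unlinked}$, and defines $F^f_{g,n_1,\ldots,n_h}(A)$ as the coefficients of the $0$-chains $[n_1,\ldots,n_h]$ for the edgeless graphs $G_{g,h}$. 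Finally, one must check that different admissible choices of the data $\{ t_e \}$, while changing the intermediate normalized system, do not change those $0$-chains; this is the well-definedness step for which your appeal to Lemma \ref{frame-link} (which only treats a single pair of embedded curves, with no zero-area disks and no graph structure) is not a substitute. Without the construction of these models and the uniqueness of the reduction, the proposal remains a program rather than a proof.
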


\begin{proof}
As in \cite{OGW1} the rational numbers $F_{g,n_1...,n_h}^f(A)$ are defined in terms of generalized linking numbers for the system of chains (\ref{system}). 

Recall that to define the generalized linking number of a system of chain in \cite{OGW1} 
we recursively contract the chains starting with the component associated to the graph with a maximal number of edges. 

In our case the singular chains are not homologically trivial. 
However we can adapt the procedure of Lemma (\ref{frame-link}) to get an invariant depending of the choice of the frame $f$.

The models of unlinked system of chains depends on the choice of some data. 
%We provide a model of these special class of system of chains as follows.
For each decorated graph $G$ choice real numbers
\begin{equation} \label{data}
 \{  t_e \}_{e \in H(G)} .
\end{equation}
Given a vertex $v_0$ of $G$ associated to a disk of area zero, and a real number $s$, define a new (\ref{data})
by changing $t_e$ to $t_e +s$ for each $e \in H(v_0)$. 
This generate an equivalence relation on the set of data (\ref{data}).    
%There is an equivalence relation on the data (\ref{data}) generated by the following relation.

%for each half edge (or oriented edge) $e \in H(G)$ pick a real number $t_e$ with the following compatibility condition. 

We require that the data (\ref{data}) satisfies the following compatibility condition 
for each $e_0  \in H(G)$ oriented edge ending to a vertex $v_0$ associated to the boundary of a disk of area zero.
%corresponding to the boundary component of a curve of area zero $v_0$. 
Let $e_1$ be $e_0$ with opposite orientation. 
For $e \in H(G)$ with $e \neq e_0$, so that $e$ defines an edge $\tilde{e}$ of $G/{e_0}$.
If $e$ start from $v_0$, we require $t_{\tilde{e}}= t_e - t_{e_1}  + t_{e_0}$.
Otherwise we require $t_{\tilde{e}}= t_e$.   

%We assume that the numbers $ \{ \{  t_e \}_{H(G)} \}_{\mathcal{G}}$ satisfies the above compatibility condition for each $G$ and $e_0  \in H(G)$. 

Using a generic choice of data (\ref{data}) with the above compatibility condition we now construct a system of chains.
We first define a singular chain $W_v \in C_*(L^{H(v)})$ for each vertex $v$ of a graph $G$. 

Let $e_1, e_2, ..., e_k$ be a cyclic order of the edges starting from $v$. 
Suppose first that $v$ corresponds to the boundary component of a curve of positive area. As in Lemma (\ref{frame-link}) 
let $L' \subset L $ be a complement of a compact subset of $L$ such that $L_0 \cong \R \times S_1 \times S_1$.
Fix a curve $\gamma_0 : S_1 \rightarrow S_1 \times S_1$ representing the frame $f$. 
Let $W_v$ be represented by the geometric cycle
$$(s_{e_1} , s_{e_2} , ..., s_{e_k} ) \rightarrow ((t_{e_1} , \gamma_0(s_{e_1} )), ((t_{e_2} , \gamma_0(s_{e_2} ),..., (t_{e_k} , \gamma_0(s_{e_k} )) $$

Assume now that $v$ is associated to the boundary of a disk of area zero.
The geometric cycle 
$$\R^+ \times S_1 \times S_1 \rightarrow (\R \times S_1 \times S_1)^k \subset L^k $$
$$(t, p) \rightarrow (t, p), (t_{e_1} - t_{e_2} + t, p), ...,(t_{e_k} -  t_{e_{k-1}} + t, p),( t_{e_1} - t_{e_k} + t, p)$$
can be extended to $L$. Let $W_v$ be the corresponding singular chain.

Define 
$$ W_G^{unlinked} = \prod_{v \in V(G)} W_v .$$

Using an iterative argument as in \cite{OGW1} it is not hard to show that any system of chain is homotopic to a unique  system of chain $W_{\mathcal{G}}^0$ 
such that, for each $G$, $W_G^0$ is some rational number times $W_G^{unlinked}$
$$  W_G^0 \sim W_G^{unlinked} .$$ 
%Moreover $W_{\mathcal{G}}^0$ is essential unique up to homotopy that does not intersect the diagonals associated to edges connecting boundary components
%of curves of positive area. In particular the 0-chain associated to graphs without edges are uniquely determined.

Different choices of (\ref{data}) lead to different $W_{\mathcal{G}}^0$. 
However the $0$-chains associated to graphs without edges do not change.

The construction above applied to the system of chain (\ref{system}) leads therefore to invariants of the triple $(X,L,f)$. 
%This can be rewritten as follows.
We define Open Gromov-Witten invariants $F_{g,n_1...,n_h}^f(A)$ by the formula
$$  W_{G_{g,h}}^0 = \sum  F_{g,n_1,...,n_h}^f(A) [n_1,...,n_h]$$
where for each $(g; h)$ let $G_{g,h}$ be the unique decorated graph with no edges of genus $g$ and $h$ boundary components and
$[n_1,...,n_h]$ is the singular $0$-chain with support in the point $(n_1,...,n_h) \in \tilde{L}_{G_{g,h}} $ 
(observe that $\tilde{L}_{G_{g,h}}=  \pi_1(L)^h \cong \Z^h $). 

%In any case we get a well defined singular $0$-chain $W_{G}'$ for all the graphs $G$ without internal edges.
%We therefore get a system of chains $  W'_{\mathcal{G}_0} $ that is homotopic to $W_{\mathcal{G}_0}$ such that $ W_G' =0 $ for all $G$ with at least one edge. Moreover $ W'_{\mathcal{G}_0} $ is unique.
%For each $(g,h)$ let $G_{g,h}$ be the unique decorated graph with no edges of genus $g$ and $h$ boundary components. 
%Define $$ F_{g,h}(A)= W_{G_{g,h}(A)}' . $$
%Since $\tilde{C}_{G_{g,h}(A)}(L) =  \pi_1(L)^h \cong \Z^h$ we can write $W_{G_{g,h}(A)}'$ as
%$$  F_{g,h}^f(A) = \sum  F_{g,n_1,...,n_h}^f(A) [n_1,...,n_h]$$
%where $[n_1,...,n_h]$ is the singular $0$-chain with support in the point $(n_1,...,n_h) \in \tilde{C}_{G_{g,h}(A)}(L) $. 
    
\end{proof}

It is straightforward to extend the preview result in the case that $L$ is a disjoint union of submanifolds
$$ L = \sqcup_{\alpha=1}^k L_{\alpha} $$
where each component $L_i$ has the topology of $S^1 \times \R^2$. 
In this case we assume that each component $L_i$ is homological trivial. For each $\alpha$ let $f_{\alpha}$ be a frame for $L_{\alpha}$. As before we can define the invariants
$$  F_{g, \vec{n}^{(1)},...,  \vec{n}^{(k)}}^{ f_1,..,f_k }(A).$$

Under the weaker assumption that $L$ is homological trivial but not all its single components $L_{\alpha}$, 
it is not possible to distinguish on which connected component of $L$ the boundary components with zero winding number are mapped. 
%it is possible to count multi-curve with only $\sum_{\alpha} n^{(\alpha)}_i$ fixed. 

%Recall that the boundary components of a multicurve with graph $G$ are in correspondence with the connected components of $G$. In other words we consider two boundary components in the same boundary component of the multicurve if they are connected by an edge of $G$. 
%For $A \in H_2(X,L)$ and $\overrightarrow{n}=\{n_1,n_2,...,n_k \}$ an arrow of integer numbers. Let $\mathcal{G}_{g,\overrightarrow{n}}(A)$ the set of decorated graphs of genus $g$, $k$ boundary components, and $n_i$ the homology class of the $i$ boundary component in $H_1(L)$.  

%We define the generalized linking number of $\{  W_G   \}_{G \in \mathcal{G}}$ as
%\begin{equation} \label{invariant}
%F_{g,\overrightarrow{n}}= \text{link}^f ( \{  W_G   \}_{G \in \mathcal{G}_{g,\overrightarrow{n}}(A)}) = \text{link} ( \{  (f_G)_*(W_G)   \}_{G \in \mathcal{G}_{g,\overrightarrow{n}}(A)}) \in \Q .
%\end{equation}
%The right side of (\ref{invariant}) is defined in \cite{OGW1} using a natural generalization of the usual definition of linking number of curves. 

%Exactly as in \cite{OGW1} we have
%\begin{proposition}
%The rational number (\ref{invariant}) depends only on $(X,L)$ and the frame (\ref{embedding}).
%\end{proposition}

%The open Gromov-Witten invariants where defined in \cite{OGW1} as a generalized linking number of the system of chains
%$$ \text{link} ( \{  W_G   \}_{G \in \mathcal{G}}) $$

\section{Knots and Links invariants} 

Gopakumar and Vafa in \cite{GV1} and \cite{GV2} conjectured that the closed topological strings amplitudes (closed Gromov-Witten invariants) of the resolved conifold 
$$X=\mathcal{O}(-1) \oplus \mathcal{O}(-1) $$ 
can be computed in terms of the Chern-Simons invariants of $S^3$ (here $\mathcal{O}(-1)$ is the tautological bundle over $\C P^1$). This duality is referred to as Large $N$ duality. 
The conjecture was later extended to Open amplitudes in \cite{OV}, \cite{MV}, \cite{LMV}. The result in particular predicts the existence of knot and link invariants counting holomorphic curves. In this section we apply the result of the previous section to give a mathematical definition of these invariants.   

Let $\mathcal{K}$ be a knot in $S^3$ represented by the curve
$$ \gamma : S^1 \rightarrow S^3 . $$ 
Large $N$ duality associate to $\mathcal{K}$ a Lagrangian submanifold of the resolved conifold $\mathcal{O}(-1) \oplus \mathcal{O}(-1) $
$$ \mathcal{K} \leadsto L_{\mathcal{K}}  $$
such that the topological open string invariants of $L_{\mathcal{K}}$ can be computed in terms of Chern-Simons invariants of $\mathcal{K}$. 
The Lagrangian $L_{\mathcal{K}}$ was constructed for the unknot in \cite{OV} and generalized to all knots in \cite{Taubes} and \cite{Kos}. For our purpose it is important to notice that as a manifold $L_{\mathcal{K}}$ is the same of the conormal Lagragian $\mathcal{C}_{\mathcal{K}} \subset T^*S^3$ of $\mathcal{K}$ (see \cite{Kos}).
% The Lagrangian $L_{\mathcal{K}} $ should be seen as the transition of the conormal lagrangian of $\mathcal{K}$. 

%Knot invariants in Chern-Simons theory depend on the choice of a frame of the knot. Large $N$ duality in particular predicts that the Open Gromov-Witten invariants should be affected by the same ambiguity. 

\begin{lemma}
Each frame of the knot $\mathcal{K}$ defines a frame of the Lagrangian $L_{\mathcal{K}} $. This correspondence is one to one
\begin{equation} \label{correspondence0}
\text{frames of }\mathcal{K} \leftrightarrow \text{frames of }L_{\mathcal{K}}
\end{equation}
\end{lemma}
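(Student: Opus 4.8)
The plan is to reduce everything to first homology of the end $L'$ by using the identification of $L_{\mathcal{K}}$ with the conormal Lagrangian $\mathcal{C}_{\mathcal{K}} = \nu^*\mathcal{K} \subset T^*S^3$, and then to match the canonical generators attached to the two notions of frame. First I would fix the topology. Since $\mathcal{K}$ is an embedded circle, $\nu^*\mathcal{K}$ is an orientable rank-two bundle over $S^1$, hence trivial, so $L_{\mathcal{K}} \cong S^1 \times \R^2$ as required by Section 2. Removing a compact neighborhood of the zero section gives $L' \cong \R \times \Lambda_{\mathcal{K}}$, where $\Lambda_{\mathcal{K}} \cong T^2$ is the unit conormal torus, so $H_1(L') \cong H_1(\Lambda_{\mathcal{K}}) \cong \Z \times \Z$. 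Next I would pin down the generator $v$ of Definition \ref{def-frame}: the conormal fiber circle over a point of $\mathcal{K}$ bounds the fiber disk in $L_{\mathcal{K}}$, hence its class $\mu$ lies in $\ker(H_1(L') \to H_1(L_{\mathcal{K}}))$; as $H_1(L_{\mathcal{K}}) \cong \Z$ is generated by $\mathcal{K}$ and the projection sends the longitude to that generator, $\mu$ generates the kernel, so $v = \mu$.

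The geometric heart of the argument is to read a frame of the knot as a longitude class of $\Lambda_{\mathcal{K}}$. Using a metric to identify $T^*S^3 \cong TS^3$ carries $\mathcal{C}_{\mathcal{K}}$ to the normal bundle $\nu\mathcal{K}$ and $\Lambda_{\mathcal{K}}$ to the unit normal bundle $S\nu\mathcal{K}$; the normal exponential $w \mapsto \exp_x(\epsilon w)$ then furnishes a diffeomorphism $\Lambda_{\mathcal{K}} \cong \partial N(\mathcal{K})$ onto the boundary of a tubular neighborhood of $\mathcal{K}$, under which the fiber class $\mu$ goes to the meridian $m$ (the small circle bounding a transverse disk). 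A frame of $\mathcal{K}$ is a homotopy class of trivialization of $\nu\mathcal{K}$, equivalently a parallel pushoff, i.e.\ a longitude $\ell$ on $\partial N(\mathcal{K})$ for which $\{m,\ell\}$ is a basis of $H_1(\partial N(\mathcal{K}))$; such framings form a $\Z$-torsor under $\ell \mapsto \ell + m$. Transporting $\ell$ through the diffeomorphism produces a class $f_\ell \in H_1(L')$ with $\{v, f_\ell\} = \{\mu, f_\ell\}$ a basis, hence a frame of $L_{\mathcal{K}}$ in the sense of (\ref{generate}); this defines the correspondence (\ref{correspondence0}).

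Finally I would establish bijectivity by equivariance. Both sides are $\Z$-torsors, and the assignment $\ell \mapsto f_\ell$ intertwines the two actions: a unit change of framing $\ell \mapsto \ell + m$ corresponds, under $m \leftrightarrow \mu = v$, to $f_\ell \mapsto f_\ell + v$, which is precisely the action on frames of $L_{\mathcal{K}}$ permitted by (\ref{generate}). A well-defined equivariant map of $\Z$-torsors is automatically bijective, and the identification $f \sim -f$ of Definition \ref{def-frame} corresponds to reversing the orientation of the longitude on the knot side, so the correspondence descends to the stated bijection once frames are taken up to sign.

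The hard part will be the geometric identification in the second paragraph: verifying that the exponential diffeomorphism $\Lambda_{\mathcal{K}} \cong \partial N(\mathcal{K})$ really sends the conormal fiber class $\mu$ to the meridian $m$ (and not to some longitude), and that the framing integer matches the longitude coefficient with the correct sign. Once this dictionary between $H_1(\Lambda_{\mathcal{K}})$ and $H_1(\partial N(\mathcal{K}))$ is secured, the remaining steps are formal bookkeeping with $\Z$-torsors and impose no further difficulty.
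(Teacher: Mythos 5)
Your proposal is correct, and it arrives at the same correspondence as the paper, but by a recognizably different route. The paper's proof is a direct construction: given the framing field $f$ along $\gamma$, it takes the covector field $\eta$ annihilating \emph{both} $\dot{\gamma}$ and $f$, and declares the graph of $\eta$ to be the frame cycle in $\tilde{\mathcal{C}}_{\mathcal{K}}$; no tubular neighborhood, exponential map, or meridian/longitude dictionary appears. Your construction instead sends $f$ to (the class of) its metric dual section $f^{\flat}=\langle f,\cdot\rangle$ of the punctured conormal bundle, packaged as the pullback of the pushoff longitude under the exponential diffeomorphism $\Lambda_{\mathcal{K}}\cong \partial N(\mathcal{K})$. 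These two cycles are not the same pointwise --- yours is the fiberwise $90^{\circ}$ rotation of the paper's $\eta$ --- but since a constant rotation in the conormal planes is a homotopy through nonvanishing sections, they represent the same class in $H_1(L')$, so the two correspondences coincide. Incidentally, the step you flag as the ``hard part'' (that the exponential diffeomorphism carries the conormal fiber class to the meridian) is essentially immediate: the unit conormal circle over $x$ is carried to the boundary of the transverse disk $\exp_x(\{|w|\le\epsilon\})$, which is the meridian by definition. What your approach genuinely buys is the second half of the statement: the paper's proof only constructs the map from frames of $\mathcal{K}$ to frames of $L_{\mathcal{K}}$ and never verifies that it is one to one, whereas your $\Z$-torsor equivariance argument (unit change of framing $\ell\mapsto\ell+m$ matching $f_\ell\mapsto f_\ell+v$ under $m\leftrightarrow\mu=v$, with the sign identifications on the two sides corresponding) supplies the bijectivity cleanly. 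The one point to state carefully is the sign bookkeeping you already noticed: on the Lagrangian side frames are identified up to $f\sim -f$, and this matches reversing the orientation of the longitude class, not reversing the normal field $f$ (pushoffs along $f$ and $-f$ are homotopic sections and give the \emph{same} longitude); once that is said, the torsor argument closes the proof.
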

\begin{proof}
Let 
$$f : S^1 \rightarrow  \gamma^*(TS^3)/ \langle \dot{\gamma}  \rangle  $$
be a frame of $\mathcal{K}$.

Since $L_{\mathcal{K}}$ and $\mathcal{C}_{\mathcal{K}}$ are the same as manifolds, we need to associate to $f$ a frame of $\mathcal{C}_{\mathcal{K}}$.
Define 
$$\tilde{\mathcal{C}}_{\mathcal{K}} = \mathcal{C}_{\mathcal{K}} \backslash K $$
where $K$ is compact with the topology of $S^1 \times Q$ with $Q$ contractible (as in Definition \ref{def-frame}). We need to associate to $f$ an element of $ H_1(\tilde{\mathcal{C}}_{\mathcal{K}})$ such that condition (\ref{condition}) holds.  
%a compatification of $\mathcal{C}_{\mathcal{K}}$. By the preview discussion we need to define an element of $H_1(\tilde{\mathcal{C}}_{\mathcal{K}})$. 

Let $\eta$ be a section
$$\eta: S^1 \rightarrow \gamma^*(T^*S^3) $$
such that for each $s \in S^1$ 
$$ \langle \eta(s) , \dot{\gamma} (s) \rangle =0   $$
$$ \langle \eta(s) , f (s) \rangle =0   . $$
The graph of $\eta$ defines a one cycle of $\tilde{\mathcal{C}}_{\mathcal{K}}$ that satisfies condition (\ref{condition}), and therefore a frame of $\mathcal{C}_{\mathcal{K}}$.

% and therefore defines a compatification (\ref{embedding}).

%Define a cycle of $L_{\mathcal{K}}$ following through the geometric transition the $1$-cycle determined by $\xi$. 

%We need to define an element of $H_1(L_{\mathcal{K}})$

\end{proof}

In \cite{Kos} it is proved that Gromov compactness holds for the pair $(X, L_{\mathcal{K}})$ for each knot $\mathcal{K}$. Moreover since $H_3(X) =0 $ the Lagrangians $L_{\mathcal{K}}$ is homological trivial. 

\begin{proposition}
Let $\mathcal{K}^f$ be a framed knot in $S^3$. The Open Gromov-Witten invariants $F_{g,(l_1,...,l_h)}^ f$ of genus $g$ and winding numbers $l_1,...,l_h $ are defined.
%$$F_{g,(l_1,...,l_h)}^ f  .$$   
\end{proposition}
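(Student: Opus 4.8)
The plan is to assemble the final Proposition from the machinery already built in the paper. The key observation is that the hypotheses needed to invoke the general Open Gromov-Witten construction of the previous section have all been verified for the pair $(X, L_{\mathcal{K}})$: namely, the cited result of \cite{Kos} gives Gromov compactness, the vanishing $H_3(X)=0$ forces $L_{\mathcal{K}}$ to be homologically trivial in $X$, and the preceding Lemma establishes that a framing of the knot $\mathcal{K}$ determines a frame $f$ of the Lagrangian $L_{\mathcal{K}}$ in the precise sense of Definition \ref{def-frame}. Thus the strategy is not to prove anything genuinely new, but to check that the input data of the Proposition defining $F_{g,n_1,\dots,n_h}^f(A)$ is present.

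First I would recall that $L_{\mathcal{K}}$ has the topology of $\R^2 \times S^1$. This is the running hypothesis of the invariant construction, and it holds here because $L_{\mathcal{K}}$ is diffeomorphic to the conormal Lagrangian $\mathcal{C}_{\mathcal{K}} \subset T^*S^3$, whose total space over the circle $\mathcal{K}$ is a rank-two bundle over $S^1$, hence $\R^2 \times S^1$. Second, I would invoke the framing correspondence (\ref{correspondence0}): a choice of framing of $\mathcal{K}^f$ produces, via the graph of a suitable annihilating section $\eta$, a frame $f$ of $L_{\mathcal{K}}$ satisfying condition (\ref{condition}). Third, having a frame $f$, Gromov compactness, and homological triviality, I would simply apply the Proposition of the previous section verbatim, which yields the rational numbers counting multi-curves of genus $g$ with boundary components of prescribed winding numbers $l_1,\dots,l_h$, together with their well-definedness.

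The one point requiring a little care is the translation of \emph{winding numbers}. In the abstract construction the $n_i$ record how many times the $i$-th boundary circle wraps the nontrivial generator of $H_1(L)\cong\Z$; here I would note that under the identification of $L_{\mathcal{K}}$ with $\mathcal{C}_{\mathcal{K}}$ this generator is exactly the longitude class of the knot, so the $l_i$ are genuine knot-theoretic winding numbers. I would also fix the chain $B \in C_4(X)$ with $\partial B = L_{\mathcal{K}}$ guaranteed by homological triviality, and remark that the invariant depends on $(X, L_{\mathcal{K}})$, the frame $f$, and this choice of $B$, exactly as in the general statement.

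I expect the main (and really the only) obstacle to be bookkeeping rather than mathematics: one must confirm that the frame produced by the Lemma is the same datum that feeds into the construction of the unlinked models $W_v^{\mathrm{unlinked}}$, i.e.\ that the curve $\gamma_0$ representing $f$ in the general argument can be taken to be the graph of $\eta$. Once this identification is made, the existence and well-definedness of $F_{g,(l_1,\dots,l_h)}^f$ follow immediately from Proposition \ref{higher-invariance} and the invariant-defining Proposition, with no further estimates required.
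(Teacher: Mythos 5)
Your proposal is correct and follows essentially the same route as the paper, which gives no explicit proof for this Proposition: the argument is precisely the assembly you describe, namely Gromov compactness for $(X, L_{\mathcal{K}})$ from \cite{Kos}, homological triviality of $L_{\mathcal{K}}$ from $H_3(X)=0$, the frame correspondence (\ref{correspondence0}) from the preceding Lemma, and then the verbatim application of the general construction of the previous section. The one point where you deviate from the paper is the dependence on the bounding chain $B$: you assert the invariants depend on the choice of $B$ ``exactly as in the general statement,'' whereas the paper observes that since $H_4(X)=0$ any two choices of $B$ are homologous, so the invariants are independent of $B$ --- and this observation is actually needed for $F^f_{g,(l_1,\dots,l_h)}$ to be well defined as an invariant of the framed knot alone, as the notation in the Proposition presupposes.
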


Observe that since $H_4(X)=0$, the invariants do not depend on the choice of $B$ in (\ref{B}). 

It is straightforward extend the result to the case of link invariants. 
\begin{proposition}
Let $\mathcal{K}_1^{f_1},...,\mathcal{K}_k^{f_k}$ be a framed link in $S^3$. The Open Gromov-Witten invariants $ F_{g,(\vec{n}^{(1)},...,\vec{n}^{(k)})}^{  f_1, ...,f_k} $ 
of genus $g$ and winding numbers $(\vec{n}^{(1)},...,\vec{n}^{(k)})$ are defined.
%$$F_{g,(l_1,...,l_h)}^ f  .$$   
\end{proposition}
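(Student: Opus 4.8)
The plan is to reduce the link statement to the single-knot proposition already proved, since the excerpt signals this is the ``straightforward'' extension of the preceding result. First I would observe that the earlier discussion already sets up the general framework: for a Lagrangian $L$ that is a disjoint union $L = \sqcup_{\alpha=1}^k L_\alpha$ with each $L_\alpha$ of topology $\R^2 \times S^1$ and each component homologically trivial, the Open Gromov-Witten invariants $F_{g,\vec{n}^{(1)},\dots,\vec{n}^{(k)}}^{f_1,\dots,f_k}(A)$ are defined once we choose a frame $f_\alpha$ for each component. So the task is really to verify that the Lagrangian $L_{\mathcal{K}_1} \sqcup \cdots \sqcup L_{\mathcal{K}_k}$ produced by Large $N$ duality from a framed link fits the hypotheses of that general construction.

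The key steps, in order, would be as follows. I would first invoke the construction $\mathcal{K}_\beta \leadsto L_{\mathcal{K}_\beta}$ for each component knot, giving a disjoint collection of Lagrangians in the resolved conifold $X = \mathcal{O}(-1)\oplus\mathcal{O}(-1)$, each diffeomorphic to the conormal $\mathcal{C}_{\mathcal{K}_\beta}$ and hence of topology $S^1 \times \R^2$. Second, I would use the preceding Lemma on the correspondence \eqref{correspondence0} componentwise to convert each framing $f_\beta$ of $\mathcal{K}_\beta$ into a frame of $L_{\mathcal{K}_\beta}$ in the sense of Definition \ref{def-frame}. Third, I would check the two standing hypotheses of the general theorem: Gromov compactness, which holds for the pair $(X, L_{\mathcal{K}_\beta})$ for each knot by the cited result of \cite{Kos} and extends to a finite disjoint union, and homological triviality, which follows since $H_3(X)=0$ forces each $[L_{\mathcal{K}_\beta}] = 0$ in $H_3(X)$ (exactly as remarked just before the single-knot proposition). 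With these in place, the general definition of $F_{g,\vec{n}^{(1)},\dots,\vec{n}^{(k)}}^{f_1,\dots,f_k}(A)$ applies verbatim, counting multi-curves whose boundary components land on the various $L_{\mathcal{K}_\beta}$ with prescribed winding numbers $\vec{n}^{(\beta)}$.

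The main subtlety — and the step I would expect to require the most care — is ensuring that the winding numbers are still well separated by component. The general construction already flagged that under the weaker hypothesis ``$L$ homologically trivial but not all single components,'' one cannot tell on which component the zero-winding boundary components sit. Here I would emphasize that because each $L_{\mathcal{K}_\beta}$ is \emph{individually} homologically trivial (guaranteed by $H_3(X)=0$), this ambiguity does not arise: the covering space and evaluation map \eqref{covering}, \eqref{evaluation} record, for each boundary component, both which $L_{\mathcal{K}_\beta}$ it maps to and its class in $\pi_1(L_{\mathcal{K}_\beta}) \cong \Z$. Thus the multi-index $(\vec{n}^{(1)},\dots,\vec{n}^{(k)})$ is genuinely refined by component, and the unlinked models $W_G^{\mathrm{unlinked}}$ of the previous proof factor through the product over components, so the iterative contraction argument of \cite{OGW1} carries through unchanged. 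The invariance of the resulting $0$-chains associated to edgeless graphs then yields well-defined invariants depending only on the framed link $(\mathcal{K}_1^{f_1},\dots,\mathcal{K}_k^{f_k})$, completing the reduction.
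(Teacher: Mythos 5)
Your proposal is correct and matches the paper's intended argument: the paper offers no explicit proof, stating only that the extension to links is straightforward, and the route it intends is exactly the one you assemble — the disjoint-union construction $F_{g,\vec{n}^{(1)},\dots,\vec{n}^{(k)}}^{f_1,\dots,f_k}(A)$ from the end of Section 3, the frame correspondence (\ref{correspondence0}) applied componentwise, Gromov compactness from \cite{Kos}, and homological triviality of each $L_{\mathcal{K}_\beta}$ from $H_3(X)=0$. Your added remark that individual homological triviality of the components avoids the zero-winding ambiguity flagged in Section 3 is a useful clarification the paper leaves implicit.
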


%Consider a link in $S^3$, and let $\mathcal{K}_1^{f_1},...,\mathcal{K}_k^{f_k}$ be its knots components. 
%Let $f_i$ a frame of $\mathcal{K}_i$. We can associated to this link the invariants $ F_{g,(\vec{n}^{(1)},...,\vec{n}^{(k))}}^{  f_1, ...,f_k} $.

\subsection{Katz-Liu computation} Katz and Liu in \cite{KL} computed the Open Gromov-Witten invariants for the Lagrangian of Large $N$ duality associated to the unknot. In this particular case the Lagrangian is invariant by the $S^1$-actions of the resolved conifold. The computation of Katz and Liu was based on the assumption that the virtual localization technique could be applied to Open Gromov-Witten invariants as in the closed case, without having a definition of Open Gromov-Witten invariant. It was realized that the result of the computation depends on the choice of the weight of the $S^1$-action. Their result was in accord with the prediction of string theory after identifying the weight of the $S^1$ action with the frame of the knot.
%Katz and Liu compute the as-of-yet undefined invariants using the virtual localization technique as applied in the closed case.
%as applied by Faber and Pandharipande in the closed case. 

Liu in \cite{Liu} defines an invariant counting open curves associated to a triple $(X,L,\rho)$, where $\rho$ is an admissible $S^1$ action on $X$ (this means that $\rho$ preserves $L$ and acts freely on $L$). This invariant could be computed using the localization formula with respect to the $S^1$-action $\rho$. In particular it gives a rigorous definition of the invariant computed in \cite{KL}. 
%In the example considered in \cite{KL} there are infinity number of admissible actions, indexed by an integer.

%The result of \cite{Liu} provides a mathematical definition of the invariant computed in \cite{KL}.

%The invariant of \cite{Liu} depends on the the choice of the $S^1$ action.

%computatio confidence that the localization technique of Kontsevich could be applied. It was realized that the result depends on the choice of the $S^1$-action, and in fact it agrees with the prediction of (??) if we identify the $S^1$-action with the frame of the Lagrangian. 

%Successively Liu \cite{liu} was able to give a rigorous definition of the counting of holomorphic curves for the tri

%in an example considered in \cite{AKMV}. Their computation was based on a naive use of the localization formula for the choice of   

%
An admissible $S^1$ action defines a frame of the Lagrangian (in the sense of Definition \ref{def-frame}): 
the associated element of $H_1(L')$ is the cycle given by an orbit of the $S^1$-action. 
In the example of \cite{KL} we have then a one to one correspondence
\begin{equation} \label{correspondence}
\text{admissible $S^1$-actions} \leftrightarrow \text{frames of }L
\end{equation}
The frame (\ref{correspondence}) is precisely the one for which the linking numbers of the orbits of the $S^1$-action are zero.  
\begin{lemma} 
The invariant of Liu \cite{Liu} agrees with the Open Gromov-Witten invariants $F_{g,\overrightarrow{n}}(A)^f$ under the correspondence (\ref{correspondence}).  
\end{lemma}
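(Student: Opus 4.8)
The plan is to show that the localization-based invariant of Liu and the generalized-linking-number invariant $F_{g,\overrightarrow{n}}^f(A)$ are two different computations of the same underlying count of multi-curves, and that the matching of frames forces them to agree term by term. First I would recall that Liu's invariant is defined via the virtual localization formula with respect to the admissible $S^1$-action $\rho$: since $\rho$ acts freely on $L$ and preserves it, the fixed-point contributions and the equivariant weights assemble into a well-defined rational count, and the dependence on the weight of the action is exactly the dependence on the orbit cycle inside $H_1(L')$. The key observation, already recorded in the excerpt via (\ref{correspondence}), is that the orbit of the admissible $S^1$-action represents the element of $H_1(L')$ whose induced linking number of orbits vanishes. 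So the first step is to make precise that the frame $f$ read off from $\rho$ is characterized by the property $\mathrm{link}(\text{orbit}, \text{orbit})=0$ in the sense of Lemma \ref{frame-link}.

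Next I would argue that both invariants count the same moduli of multi-curves, namely the perturbed Kuranishi spaces $\{\overline{\mathcal{M}}_G\}_{G\in\mathcal{G}}$, only organized differently. On the localization side the $S^1$-action restricts the count to $\rho$-invariant configurations and weights them by equivariant Euler classes; on the generalized-linking-number side one contracts the system of chains (\ref{system}) against the unlinked models $W_G^{unlinked}$ built from the frame $f$. The heart of the comparison is to show that the unlinked model determined by $f$ is precisely the localization-fixed model determined by $\rho$. Concretely, I would take the geometric cycles defining $W_v$ in the proof of the previous proposition — the curves $(t_{e_i},\gamma_0(s_{e_i}))$ for positive-area vertices and the extended $\R^+\times S_1\times S_1$ cycle for zero-area vertices — and identify them with the $\rho$-orbit cycles appearing at the fixed loci of the localization computation. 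Since under (\ref{correspondence}) the orbit cycle is exactly the framing curve $\gamma_0$ representing $f$, the two families of reference cycles coincide, and the intersection-theoretic contraction that defines the generalized linking number reproduces the equivariant weights of the fixed-point formula.

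I would then conclude by a term-by-term identification: for each decorated graph $G_{g,h}$ with no edges, the $0$-chain $W_{G_{g,h}}^0 = \sum F_{g,n_1,\dots,n_h}^f(A)[n_1,\dots,n_h]$ records the same rational coefficients that Liu's localization sum attaches to the corresponding fixed contribution with winding numbers $\overrightarrow{n}$ and homology class $A$. Because Proposition \ref{higher-invariance} guarantees that these coefficients are independent of the perturbation and depend only on $(X,L,f)$, it suffices to verify the equality for one convenient choice of perturbation — the $\rho$-equivariant one — for which Liu's localization formula is literally the evaluation of the contracted system of chains.

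The main obstacle I expect is the compatibility of the two regularization schemes at the zero-area (constant-disk) vertices. On the generalized-linking-number side these are handled by the special $\R^+\times S_1\times S_1$ geometric cycle together with the compatibility condition on the data $\{t_e\}$; on the localization side they correspond to the degenerate fixed loci where the equivariant integrand is most delicate, and matching the orbifold multiplicities and signs there — ensuring that the $S^1$-fixed virtual normal bundle contributions agree with the twisted-coefficient intersection numbers of $\tilde{\Delta}_e$ — is the step that requires the most care. Establishing that the chosen admissible action can be made compatible with the perturbations constraining $\{\overline{\mathcal{M}}_G\}_{G\in\mathcal{G}}$, so that both sides genuinely compute with the same perturbed moduli data, is where the real work lies.
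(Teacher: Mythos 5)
Your overall strategy coincides with the paper's: reduce, via Proposition \ref{higher-invariance}, to the $\rho$-equivariant perturbation, use the correspondence (\ref{correspondence}) under which the orbit cycle of the admissible action represents the frame $f$, and then compare the resulting system of chains with Liu's count. The paper's own argument is likewise only a sketch (it states explicitly that the key vanishing statement is not proved), so at that level you are on the same footing, and your closing paragraph correctly identifies where the unproved technical work lies.

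However, the mechanism you place at the ``heart of the comparison'' is off target. The claim that the intersection-theoretic contraction ``reproduces the equivariant weights of the fixed-point formula'' is neither needed nor plausible as stated: the generalized linking numbers do not compute equivariant Euler classes of virtual normal bundles, and no term-by-term match with localization weights is required. The paper's mechanism is simpler, and it is one you already have in hand from your first step but never deploy: with an $S^1$-invariant perturbation, every boundary component of every perturbed multi-curve is (a multiple of) an orbit of $\rho$, and for the frame determined by (\ref{correspondence}) the linking number of any two orbits is zero in the sense of Lemma \ref{frame-link}. Hence the contribution of \emph{every} decorated graph with at least one edge vanishes, the contraction of the system of chains (\ref{system}) produces no correction terms, and what remains is the bare count attached to the edge-less graphs $G_{g,h}$ --- which, computed with the same equivariant perturbation, is exactly Liu's invariant. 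In short, the property $\mathrm{link}(\text{orbit},\text{orbit})=0$ should be used to \emph{kill} the nontrivial graph contributions, not to match them against localization data; as written, your middle step replaces the easy vanishing argument with a harder, unsubstantiated identification.
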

The Lemma follows from the fact that the contribution of nontrivial (that is with at least an edge) 
decorated graphs is zero if we use of a perturbations invariants with respect to the inducted $S^1$-action (as in \cite{Liu}). 
We do not prove these statement, however it can be quickly understood if we assume that the localization technique can be applied to our case. 
The contribution of nontrivial graphs is computed in terms of linking numbers of boundaries components of curves. 
Since these components are orbits of the $S^1$-action, for the choice of the frame (\ref{correspondence}) their linking number is zero as observed before.

\end{document}